\numberwithin{equation}{subsection}
\theoremstyle{plain}
\newtheorem{thm}{Theorem}[section]
\newtheorem{cor}[thm]{Corollary}
\theoremstyle{definition}
\newtheorem{defn}[thm]{Definition}
\theoremstyle{remark}
\newtheorem{rem}[thm]{Remark}
\numberwithin{equation}{section}
\begin{document}

%---------------------------------------------------------------------------

\begin{center}\textbf{On the absence of Volterra correct restrictions and extensions\\
of the Laplace operator}\end{center}

\begin{center}\textbf{Bazarkan\,N. Biyarov}\end{center}

\begin{center}January 30, 2016\end{center}
\textbf{Key words:} Laplace operator, maximal (minimal) operator, Volterra operator, Volterra correct restrictions and extensions of operators, Hilbert space, elliptic operator
\\ \\
\textbf{AMS Mathematics Subject Classification:} Primary 47Axx, 47A10, 47A75; Secondary 47Fxx
\\

%%% ----------------------------------------------------------------------

\begin{abstract}
At the beginning of the last century J.\,Hadamard constructed the well-known example illustrating the incorrectness of the Cauchy problem for elliptic-type equations. If the Cauchy problem for some differential equation is correct, then it is usually a Volterra problem, i.e., the inverse operator is a Volterra operator. At present, not a single Volterra correct restriction or extension for elliptic-type equations is known.
In the present paper, we prove the absence of Volterra correct restrictions of the maximal operator $\widehat{L}$ and Volterra correct extensions of the minimal operator $L_0$ generated by the Laplace operator in $L_2(\Omega)$, where $\Omega$ is the unit disk.
\end{abstract}

\section{Introduction}
\label{section1}

Let us present some definitions, notation, and terminology.

In a Hilbert space $H$, we consider a linear operator $L$ with  domain $D(L)$ and range $R(L)$. By the \textit{kernel} of the operator $L$ we mean the set
\[\mbox{Ker}\,L=\bigl\{f\in D(L): \; Lf=0\bigl\}.\]

\begin{defn}
\label{def1}  
An operator $L$ is called a \textit{restriction} of an operator $L_1$, and $L_1$ is called an \textit{extension} of an operator $L$, briefly $L\subset L_1$, if:

1) $D(L)\subset D(L_1)$,

2) $Lf=L_1f$ for all $f$ from $D(L)$.
\end{defn}

\begin{defn}
\label{def2}
A linear closed operator $L_0$ in a Hilbert space $H$ is called \textit{minimal} if $\overline{R(L_0)} \not=H$ and there exists a bounded inverse operator $L_0^{-1}$ on $R(L_0)$. 
\end{defn}

\begin{defn}
\label{def3}
A linear closed operator $\widehat{L}$ in a Hilbert space $H$ is called \textit{maximal} if $R(\widehat{L})=H$ and $\mbox{Ker}\, \widehat{L} \not=\{0\}$. 
\end{defn}

\begin{defn}
\label{def4}
A linear closed operator $L$ in a Hilbert space $H$ is called \textit{correct} if there exists a bounded inverse operator $L^{-1}$ defined on all of $H$. 
\end{defn}

\begin{defn}
\label{def5}
We say that a correct operator $L$ in a Hilbert space $H$ is a \textit{correct extension} of minimal operator $L_0$ (\textit{correct restriction} of maximal operator $\widehat{L}$) if $L_0\subset L$ ($L\subset \widehat{L}$).
\end{defn}

\begin{defn}
\label{def6}
We say that a correct operator $L$ in a Hilbert space $H$ is a \textit{boundary correct} extension of a minimal operator $L_0$ with respect to a maximal operator $\widehat{L}$ if $L$ is simultaneously a correct restriction of the maximal operator $\widehat{L}$ and a correct extension of the minimal operator $L_0$, that is, $L_0\subset L \subset \widehat{L}$.
\end{defn}

Let $\widehat{L}$ be a maximal linear operator in a Hilbert space $H$, let $L$ be any known correct restriction of $\widehat{L}$, and let $K$ be an arbitrary linear bounded (in $H$) operator satisfying the following condition:
\begin{equation}\label{1.1}
R(K)\subset \mbox{Ker}\, \widehat{L}.
\end{equation}
Then the operator $L_K^{-1}$ defined by the formula (see {\cite{Kokebaev}})
\begin{equation}\label{1.2}
L_K^{-1}f=L^{-1}f+Kf,
\end{equation}
describes the inverse operators to all possible correct restrictions $L_K$ of $\widehat{L}$, i.e., $L_K\subset \widehat{L}$.

Let $L_0$ be a minimal operator in a Hilbert space $H$, let $L$ be any known correct extension of $L_0$, and let $K$ be a linear bounded operator in $H$ satisfying the conditions

a) $R(L_0)\subset \mbox{Ker}\,K$,

b) $\mbox{Ker}\,(L^{-1}+K)=\{0\}$,
\\
then the operator $L_K^{-1}$ defined by formula \eqref{1.2}
describes the inverse operators to all possible correct extensions $L_K$ of  $L_0$ (see {\cite{Kokebaev}}).

Let $L$ be any known boundary correct extension of $L_0$, i.e., $L_0\subset L\subset \widehat{L}$. The existence of at least one boundary correct extension $L$ was proved by Vishik in {\cite{Vishik}}. Let $K$ be a linear bounded (in $H$) operator satisfying the conditions

a) $R(L_0)\subset \mbox{Ker}\,K$,

b) $R(K)\subset \mbox{Ker}\,\widehat{L}$,
\\
then the operator $L_K^{-1}$ defined by formula \eqref{1.2}
describes the inverse operators to all possible boundary correct extensions $L_K$ of $L_0$ (see {\cite{Kokebaev}}). 

\begin{defn}
\label{def7} A bounded operator $A$ in a Hilbert space $H$ is called \textit{quasinilpotent} if its spectral
radius is zero, that is, the spectrum consists of the single point zero.
\end{defn}

\begin{defn}
\label{def8} An operator $A$ in a Hilbert space $H$ is called a \textit{Volterra operator} if $A$ is compact and quasinilpotent.
\end{defn}

\begin{defn}
\label{def9} A correct restriction $L$ of a maximal operator $\widehat{L} \;(L\subset \widehat{L})$, a correct extension $L$ of a minimal operator $L_0 \; (L_0 \subset L)$ or a boundary correct extension $L$ of a minimal operator $L_0$ with respect to a maximal operator $\widehat{L} \; (L_0\subset L \subset \widehat{L})$, will be called \textit{Volterra} if the inverse operator $L^{-1}$ is a
Volterra operator.
\end{defn}

We denote by $\mathfrak{S}_{\infty}(H, H_{1})$ the set of all linear compact operators acting from a Hilbert space $H$ to a
Hilbert space $H_1$. If $T\in \mathfrak{S}_{\infty}(H, H_{1})$, then $T^{*}T$ is a non-negative self-adjoint operator in $\mathfrak{S}_{\infty}(H)\equiv \mathfrak{S}_{\infty}(H, H)$ and, moreover, there is a non-negative unique self-adjoint root $|T|=(T^{*}T)^{1/2}$ in $\mathfrak{S}_{\infty}(H)$. The eigenvalues $\lambda _n(|T|)$ numbered, taking into account their multiplicity, form a monotonically converging to zero sequence of non-negative numbers. These numbers are usually called \textit{$s$-numbers} of the operator $T$ and denoted by $s_n(T)$, $n\in \mathbb{N}$. We denote by $\mathfrak{S}_{p}(H, H_{1})$ the set of all compact operators $T\in \mathfrak{S}_{\infty}(H, H_{1})$, for which
\[|T|_p^p=\sum_{j=1}^{\infty} s_{j}^{p}(T)<\infty, \quad 0<p<\infty.\]

Obviously, if rank $\mbox{rank}\,|T|=r< \infty$, then $s_n(T)=0$, for $n=r+1, r+2, \ldots\:$. Operators of finite rank certainly belong to the classes $\mathfrak{S}_{p}(H, H_{1})$ for all $p>0$.

In the Hilbert space $L_2(\Omega)$, where $\Omega$ is the unit disk in $\mathbb R^2$ with  boundary $\partial\Omega$, let us consider the minimal $L_0$ and maximal $\widehat{L}$ operators generated by the Laplace operator
\begin{equation}\label{1.3}
-\Delta u=-\biggl(\frac{\partial^2 u}{\partial{x^2}}+\frac{\partial^2 u}{\partial{y^2}}\biggr).
\end{equation}

The closure $L_0$ in the space $L_2(\Omega)$ of the Laplace operator \eqref{1.3} with the domain $C_0^\infty (\Omega)$ is the \textit{minimal operator corresponding to the Laplace operator}.

The operator $\widehat{L}$, adjoint to the minimal operator $L_0$ corresponding to the Laplace operator is the \textit{maximal operator corresponding to the Laplace operator} (see \cite{Hormander}). Note that \[D(\widehat{L})=\{u\in L_2(\Omega): \; \widehat{L}u =-\Delta u \in L_2(\Omega)\}.\]

Denote by $L_D$ the operator, corresponding to the Dirichlet problem with the domain  
\[ D(L_D)=\{u\in W_2^2(\Omega): \; u|_{\partial\Omega}=0\}. \]

Then, by virtue of \eqref{1.2}, the inverse operators $L^{-1}$ to all possible correct restrictions of the maximal operator $\widehat{L}$ corresponding to the Laplace operator \eqref{1.3} have the following form:
\begin{equation}
u\equiv L^{-1}f=L_D^{-1}f+Kf, \label{1.4}
\end{equation}
where, by virtue of \eqref{1.1}, $K$ is an arbitrary linear operator bounded in $L_2(\Omega)$ with 
\[R(K)\subset \mbox{Ker}\,\widehat{L} =\{u\in L_2(\Omega): \: -\Delta u=0\}.\]

Then the direct operator $L$ is determined from the following problem:
\begin{equation}
\widehat{L}u\equiv -\Delta u=f, \quad f\in L_2(\Omega),\label{1.5}
\end{equation}
\begin{equation}
D(L)=\{ u\in D(\widehat{L}) : \; [(I-K \widehat{L})u]|_{\partial\Omega}=0 \}, \label{1.6}
\end{equation}
where $I$ is the unit operator in $L_2(\Omega)$. There are no other linear correct restrictions of the operator $\widehat{L}$ (see \cite{Biyarov}).

The operators $(L^*)^{-1}$, corresponding to the adjoint operators $L^*$
\[
v\equiv (L^*)^{-1}g=L_D^{-1}g+K^*g,
\]
describe the inverse operators to all possible correct extensions of the minimal operator $L_0$ if and only if $K$ satisfies the condition (see \cite{Biyarov}):
\[ \mbox{Ker}(L_D^{-1}+K^*)=\{0\}. \]
Note that the last condition is equivalent to the following: $\overline{D(L)}= L_2(\Omega)$.
If the operator $K$ from \eqref{1.4} satisfies one more additional condition
\[ KR(L_0)=\{0\}, \]
then the operator $L$ corresponding to problem \eqref{1.5}, \eqref{1.6}, will turn out to be a boundary correct extension. 

Now we state the main result.

\section{Main results}
\label{section2}

We pass to the polar coordinate system:
\[x=r \cos\varphi, \: y=r \sin\varphi.\]
Then the operator
\begin{equation} 
\label{2.1} 
\widehat{L}u \equiv -\Delta u=-\frac{\partial^2 u}{\partial{x^2}}-\frac{\partial^2 u}{\partial{y^2}}=-u_{rr}-\frac{1}{r}u_r-\frac{1}{r^2}u_{\varphi\varphi}=f(r, \varphi), 
\end{equation}
on
\[D(\widehat{L})=\{u\in L_2(\Omega): \; \Delta u \in L_2(\Omega)\},\]
is the maximal operator (see \cite{Biyarov2}). Any correct restriction $L$ acts as the maximal operator $\widehat{L}$ on the domain
\begin{equation} 
\label{2.2} 
D(L)=\{u\in D(\widehat{L}): \; [(I-K \widehat{L})u]|_{r=1}=0  \}, 
\end{equation}
where $K$ is any bounded linear operator in $L_2(\Omega)$ that
$R(K)\subset \mbox{Ker}\,\widehat{L}$.
To be Volterra of $L$ is necessary compactness of $L^{-1}$.
From \eqref{1.4} note that $L^{-1}$ is compact if and only if $K$ is a compact operator. Then for $K$ the Schmidt expansion takes place (see \cite[p.\,47(28)]{Gohberg})
\begin{equation}\label{2.3}
K=\sum_{j=1}^{\infty}s_j(\:\cdot\,, \,Q_j)F_j               
\end{equation}       
where $\{Q_j \}_1^\infty$ is orthonormal system in $L_2(\Omega)$, $\,\{F_j \}_1^\infty$ is orthonormal system in $\mbox{Ker}\,\widehat{L}\,$ and $\,\{s_j \}_1^\infty$ is a monotone sequence of non-negative numbers converging to zero. The series on the right side of \eqref{2.3} converges in the uniform operator norm. We now state the main result of this paper.
\begin{thm}\label{Theorem1}
Let $\widehat{L}$ be a maximal operator generated by the Laplace~\eqref{1.3} in $L_2(\Omega)$. Then any correct restriction $L$ of the maximal operator $\widehat{L}$, i.e., the problem \eqref{2.1} and \eqref{2.2} cannot be Volterra. 
\end{thm}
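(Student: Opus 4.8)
The plan is to show that every compact $K$ with $R(K)\subset\operatorname{Ker}\widehat L$ forces the associated correct restriction $L$ (problem \eqref{2.1}, \eqref{2.2}) to have a nonzero eigenvalue, which already rules out quasinilpotency of $L^{-1}$ and hence the Volterra property. I would argue by constructing, from the Schmidt expansion \eqref{2.3}, an explicit nonzero point of the spectrum of $L^{-1}=L_D^{-1}+K$. The natural place to look is the harmonic subspace $\operatorname{Ker}\widehat L$, which is infinite-dimensional and invariant under the "correction'' part $K$ but \emph{not} under $L_D^{-1}$; the interplay of these two facts is what produces an eigenvalue.

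First I would fix a convenient orthonormal basis of $\operatorname{Ker}\widehat L\subset L_2(\Omega)$ coming from separation of variables in \eqref{2.1}, namely the harmonic polynomials $r^{|n|}e^{in\varphi}$ suitably normalized; call them $\{h_n\}$. On this basis one can compute the action of $L_D^{-1}$ explicitly (solving $-\Delta u=h_n$ with zero Dirichlet data is an elementary ODE in $r$ for each Fourier mode), so $L_D^{-1}h_n$ is a known explicit function, and in particular its component along $\operatorname{Ker}\widehat L$ and its component orthogonal to it are computable. The key structural observation I want to extract is a triangularity/positivity statement: with respect to the grading of $L_2(\Omega)$ by Fourier mode in $\varphi$ and by "radial degree,'' the operator $L_D^{-1}$ maps each harmonic mode to something with a strictly positive inner product with a lower (or the same) harmonic mode — i.e. $L_D^{-1}$ has no chance of killing the harmonic directions, so $L^{-1}$ restricted-and-compressed to $\operatorname{Ker}\widehat L$ cannot be nilpotent.

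More precisely, let $P$ be the orthogonal projection of $L_2(\Omega)$ onto $\operatorname{Ker}\widehat L$. I would study $A:=P L^{-1}|_{\operatorname{Ker}\widehat L}=P L_D^{-1}P + PKP$ (using $R(K)\subset\operatorname{Ker}\widehat L$, so $PK=K$, and also $PKP=K P$). This $A$ is a compact operator on the infinite-dimensional Hilbert space $\operatorname{Ker}\widehat L$. I claim $A$ has a nonzero eigenvalue: the term $PL_D^{-1}P$ is, in the $\{h_n\}$ basis, a "lower-triangular'' operator with explicitly nonzero diagonal entries $\langle L_D^{-1}h_n,h_n\rangle>0$ (this positivity is the arithmetic heart of the argument and follows from $L_D^{-1}$ being a positive operator together with $h_n\notin\operatorname{Ker}(L_D^{-1})$), and adding the compact perturbation $PKP$ cannot turn a compact operator with infinitely many nonzero "diagonal'' contributions into a nilpotent one — one shows $\operatorname{tr}(A^m)\neq 0$ for a suitable $m$, or more robustly, that $A$ restricted to an appropriate finite-dimensional $A$-invariant subspace is invertible. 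Any nonzero $\lambda\in\sigma(A)$ is then a nonzero eigenvalue of $L^{-1}$ on $L_2(\Omega)$ (if $Av=\lambda v$ with $v\in\operatorname{Ker}\widehat L$, then... one must check $L^{-1}v=\lambda v$, which uses $\widehat L v=0$ and the boundary condition \eqref{2.2}), so $L^{-1}$ is not quasinilpotent and $L$ is not Volterra.

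The main obstacle I anticipate is exactly the step "a compact operator that is lower-triangular with infinitely many nonzero diagonal entries, plus an arbitrary compact perturbation respecting the same upper-triangular structure on the complementary side, still has nonzero spectrum.'' This is false for general compact perturbations, so the real content is to exploit the extra rigidity available here: either (i) the perturbation $K$ is constrained because $L$ must still be a closed operator with the boundary condition \eqref{2.2} making sense, or (ii) one does not perturb the diagonal at all because $\operatorname{Ker}\widehat L$ is $K$-invariant in a way compatible with the grading. I would resolve this by choosing the basis $\{h_n\}$ so that $K$ itself is block-triangular with respect to it — which is possible precisely because $R(K)\subset\operatorname{Ker}\widehat L$ and one may further decompose by $\varphi$-Fourier mode, on each of which $\operatorname{Ker}\widehat L$ is one-dimensional (spanned by $r^{|n|}e^{in\varphi}$) — reducing the whole question to the single infinite matrix $PL_D^{-1}P+K$ acting on $\ell^2$, triangular with a strictly positive, non-summable-to-zero-too-fast diagonal, whose spectrum manifestly contains the closure of its diagonal and in particular a nonzero point. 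Making the positivity and the triangularity statements precise, and verifying that the resulting eigenvector indeed lies in $D(L)$, are the two calculations I would carry out in detail.
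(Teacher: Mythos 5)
Your strategy has a fatal structural flaw: you look for a nonzero eigenvalue of $L^{-1}$ by examining the compression $A=PL^{-1}P$ to the harmonic subspace $\mathrm{Ker}\,\widehat{L}$, but that subspace is not invariant under $L^{-1}$, and the nonzero spectrum of a compression to a non-invariant subspace says nothing about the spectrum of the full operator. Worse, no eigenvector of $L^{-1}$ with nonzero eigenvalue can be harmonic: $L^{-1}f=\mu f$ with $\mu\neq0$ forces $-\Delta f=\mu^{-1}f$, so $f$ solves a Helmholtz equation and $\widehat{L}f\neq0$; the eigenvectors you are trying to locate simply do not live in $\mathrm{Ker}\,\widehat{L}$. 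A concrete way to see that the compression carries no information is to take $K:=-PL_D^{-1}P$, which is compact with $R(K)\subset\mathrm{Ker}\,\widehat{L}$ and hence admissible in \eqref{1.4}; for this $K$ one gets $A=PL_D^{-1}P+PKP=0$, so your method detects nothing, yet the theorem asserts this $L$ is still not Volterra. The same example destroys your fallback: in the Schmidt expansion \eqref{2.3} only the range vectors $F_j$ are constrained to be harmonic, while the $Q_j$ form an arbitrary orthonormal system in $L_2(\Omega)$ mixing all $\varphi$-Fourier modes, so $PKP$ is an essentially arbitrary compact operator on $\mathrm{Ker}\,\widehat{L}$ and can cancel the positive diagonal of $PL_D^{-1}P$ outright. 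The triangularity/Ringrose-type argument therefore cannot be salvaged, as you yourself half-suspected in your ``main obstacle'' paragraph.

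The paper's proof goes a genuinely different route, and the difference is instructive. Instead of compressing to the harmonic subspace, it parametrizes all $L_2$-solutions of the eigenvalue equation $-\Delta u=\lambda^2u$ by harmonic functions via Vekua's transmutation formula with the Bessel kernel $J_0$, inserts this representation into the boundary condition \eqref{2.2}, and observes that absence of eigenvalues means an explicit entire function of $\lambda$ (even, of order at most one) has no zeros; the Hadamard--Picard factorization then forces it to be constant in $\lambda$. Equating all Taylor coefficients in $\lambda^{2n}$ to zero and using completeness of $\{t^{2n}\}$ in $L_2(0,1)$ yields the integral identities \eqref{2.10} and \eqref{2.14}, whose right-hand sides behave like $t^{-n}$ near $t=0$ and are therefore incompatible with $Q_j\in L_2(\Omega)$. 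If you want to repair your own approach, you would have to work with the genuine eigenvalue problem on all of $L_2(\Omega)$ rather than with a compression to $\mathrm{Ker}\,\widehat{L}$, which is essentially what the Vekua representation accomplishes.
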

\begin{proof}
Let us prove by contradiction. Suppose that there exists a Volterra correct restriction $L$. This is equivalent to the existence of a such compact operator $K$ that the operator $L$ has no non-zero eigenvalue. The general solution of the equation
\[\widehat{L}u=-\Delta u=-u_{rr}-\frac{1}{r}u_r-\frac{1}{r^2}u_{\varphi\varphi}=\lambda^2u,\]
from the space $L_2(\Omega)$ has the form (see \cite{Vekya}) 
\[u(r,\varphi)=u_0(r,\varphi)-\int_0^r u_0(\rho,\varphi)\frac{\partial}{\partial\rho} J_0(\lambda\sqrt{r(r-\rho)})d\rho,\]
where $\lambda$ is any complex number, $u_0(r,\varphi)$ is the solution of the equation
\[\widehat{L}u_0\equiv-\Delta u_0=-u_{0_{rr}}-\frac{1}{r}u_{0_r}-\frac{1}{r^2}u_{0_{\varphi\varphi}}=0,\]
which is a harmonic function from the space $L_2(\Omega)$ and 
\[J_0(z)=\sum_{n=0}^\infty\dfrac{(-1)^n}{(n!)^2}\Bigl(\dfrac{z}{2}\Bigr)^{2n}\]  
is the Bessel function.
Then, by virtue of \eqref{2.2} we obtain the equation
\begin{equation}\label{2.4}
\begin{split}
&u_0(1,\varphi)-\int_0^1 u_0(\rho,\varphi)\frac{\partial}{\partial\rho}J_0(\lambda\sqrt{1-\rho})d\rho \\[5pt]
&\, -\lambda^2 \sum_{j=1}^{\infty}s_j F_j(1, \varphi)\cdot \frac{1}{2\pi}\int_0^{2\pi}\int_0^1u_0(\rho,\theta)\cdot \overline{Q_j(\rho,\theta)} \rho d\rho d\theta \\[5pt]
&\, +\lambda^2 \sum_{j=1}^{\infty}s_j F_j(1, \varphi)\frac{1}{2\pi}\int_0^{2\pi}\int_0^1 \overline{Q_j(\rho,\theta)}\cdot \int_0^\rho u_0(\tau,\theta)\frac{\partial}{\partial\tau}J_0(\lambda \sqrt{\rho(\rho-\tau)}d\tau\rho d\rho d\theta =0.
\end{split}
\end{equation}

The considered problem on the spectrum of the Laplace operator has no eigenvalues if and only if the equation \eqref{2.4} has no zeros as a function of $\lambda$. The harmonic function $u_0(\rho,\varphi)$ does not depend on $\lambda$. It is easy to notice that the left side of the equation is an entire function no higher than the first order. Then by virtue of Picard's theorem (see \cite[p.\,264, 266]{Markushevich}) this function have the form $C e^{d\lambda}$, where $C(\varphi)$ and $d(\varphi)$ are a functions which are independent of $\lambda$. If you notice that the left side of the equation \eqref{2.4} is even with respect to the sign of $\lambda$, then $d=0$. Equating these functions when $\lambda=0$ we have $C=u_0(1,\varphi)$. Then we get the following
\begin{equation}\label{2.5}
\begin{split}
&-\int_0^1 u_0(\rho,\varphi)\frac{\partial}{\partial\rho}J_0(\lambda\sqrt{1-\rho})d\rho \\[5pt]
&\, -\lambda^2 \sum_{j=1}^{\infty}s_j F_j(1, \varphi)\cdot \frac{1}{2\pi}\int_0^{2\pi}\int_0^1u_0(\rho,\theta)\cdot \overline{Q_j(\rho,\theta)} \rho d\rho d\theta \\[5pt]
&\quad +\lambda^2 \sum_{j=1}^{\infty}s_j F_j(1, \varphi)\cdot \frac{1}{2\pi}\int_0^{2\pi}\int_0^1 \overline{Q_j(\rho,\theta)} \int_0^\rho  u_0(\tau,\theta)\frac{\partial}{\partial\tau}J_0(\lambda \sqrt{\rho(\rho-\tau)}d\tau\rho d\rho d\theta =0.
\end{split}
\end{equation}
Divide both sides of \eqref{2.5} by $\lambda^2$ and let $\lambda$ tend to zero. Then
\begin{equation}\label{2.6}
\sum_{j=1}^{\infty}s_j F_j(1, \varphi)\cdot \frac{1}{2\pi}\int_0^{2\pi}\int_0^1 u_0(\rho,\theta) \overline{Q_j(\rho,\theta)}\rho d\rho d\theta= -\frac{1}{4}\int_0^1 u_0(\rho,\varphi)d\rho.
\end{equation}
Under the condition that \eqref{2.6} is fulfilled we obtain
\begin{equation}\label{2.7}
\begin{split}
&-\int_0^1 u_0(\rho,\varphi)\biggl[\frac{\partial J_0}{\partial\rho}(\lambda\sqrt{1-\rho})+\frac{1}{4}\biggr]d\rho \\[5pt]
&\, + \sum_{j=1}^{\infty}s_j F_j(1, \varphi)\cdot \frac{1}{2\pi}\int_0^{2\pi}\int_0^1 \overline{Q_j(\rho,\theta)} \int_0^\rho  u_0(\tau,\theta)\frac{\partial}{\partial\tau} J_0(\lambda \sqrt{\rho(\rho-\tau)}d\tau\rho d\rho d\theta =0.
\end{split}
\end{equation}
On the left side of the equation \eqref{2.7} we make the change of variables: in the first summand $t=\sqrt{1-\rho}$,  in the second summand $t=\sqrt{\rho(\rho-\tau)}$. Then we have
\begin{equation}\label{2.8}
\begin{split}
&\int_0^1 u_0(1-t^2,\varphi)\biggl[\frac{J_0'(\lambda t)}{2\lambda t}+\frac{1}{4}\biggr] 2tdt \\[5pt]
&\, - \sum_{j=1}^{\infty}s_j F_j(1, \varphi)\cdot \frac{1}{2\pi}\int_0^{2\pi}\int_0^1 J_0'(\lambda t) \int_t^1 u_0\Bigl(\frac{\rho^2-t^2}{\rho}, \theta\Bigr) \overline{Q_j(\rho,\theta)} \rho d\rho dt d\theta =0.
\end{split}
\end{equation}
For the Bessel function has the following equalities
\[\dfrac{J'_0(\lambda t)}{2\lambda t}+\dfrac{1}{4}=\dfrac{1}{4}\sum_{n=1}^\infty \dfrac{(-1)^{n+1}}{((n+1)!)^2}\Bigl(\dfrac{\lambda t}{2}\Bigr)^{2n}\cdot(n+1),\]
and
\[\lambda J'_0(\lambda t)=\sum_{n=1}^\infty \dfrac{(-1)^{n}}{(n!)^2}\Bigl(\dfrac{\lambda t}{2}\Bigr)^{2n}\cdot n\cdot\dfrac{2}{t}.\]
Substitute them into \eqref{2.8} and equate the coefficients of $\lambda^{2n}$ to zero
\[
\begin{split}
&\int_0^1 u_0(1-t^2,\varphi)\frac{-1}{4(n+1)}\cdot t^{2n}\cdot 2tdt \\[5pt]
&\, - \sum_{j=1}^{\infty}s_j F_j(1, \varphi)\cdot \frac{1}{2\pi}\int_0^{2\pi}\int_0^1 nt^{2n}\cdot \frac{2}{t} \int_t^1 u_0\Bigl(\frac{\rho^2-t^2}{\rho}, \theta\Bigr) \overline{Q_j(\rho,\theta)} \rho d\rho dt d\theta =0.
\end{split}
\]
We do the conversion of the following form
\[\frac{1}{n+1}\cdot t^{2n}=\frac{2}{t^2}\int_0^t\tau^{2n+1}d\tau,\qquad n\cdot t^{2n}=\frac{t}{2}\cdot \frac{\partial}{\partial t}\Bigl(t^{2n}\Bigr).\]
Then
\[\begin{split}
&\int_0^1t^{2n}\cdot \biggl\{ t\int_t^1 u_0(1-\tau^2,\varphi) \frac{d\tau}{\tau}dt\\[5pt]
&\quad-\frac{1}{2\pi}\int_0^{2\pi} \frac{\partial}{\partial t} \sum_{j=1}^{\infty}s_j F_j(1, \varphi)\cdot \int_t^1 u_0\Bigl(\frac{\rho^2-t^2}{\rho}, \theta\Bigr) \overline{Q_j(\rho,\theta)} \rho d\rho d\theta \biggr\} dt=0.
\end{split}\]
In view of the completeness of the system of functions $\{t^{2n}\}_1^\infty$ in $L_2(0,1)$ we obtain (see \cite[p.\,107]{Kach})
\[
 t\int_t^1 u_0(1-\tau^2,\varphi) \frac{d\tau}{\tau}
-\frac{1}{2\pi}\int_0^{2\pi} \frac{\partial}{\partial t} \sum_{j=1}^{\infty}s_j F_j(1, \varphi)\cdot \int_t^1 u_0\Bigl(\frac{\rho^2-t^2}{\rho}, \theta\Bigr) \overline{Q_j(\rho,\theta)} \rho d\rho d\theta=0.
\]
Integrating this equation from $t$ to $1$, we get
\begin{equation}\label{2.9}
\begin{split}
&\,\int_t^1 u_0(1-\tau^2,\varphi) \frac{\tau^2-t^2}{2\tau}d\tau\\[5pt]
&\quad+\frac{1}{2\pi}\int_0^{2\pi} \int_t^1 u_0\Bigl(\frac{\rho^2-t^2}{\rho}, \theta\Bigr) \sum_{j=1}^{\infty}s_j F_j(1, \varphi) \overline{Q_j(\rho,\theta)} \rho d\rho d\theta=0.
\end{split}
\end{equation}
where $\,0\leq t\leq1, \; 0\leq\varphi<2\pi$. Note that the condition \eqref{2.9} contains the condition \eqref{2.6} as a particular case when $t=0$. Condition \eqref{2.9} will turn out to be the Volterra criterion of the correct restriction $L$, if it holds for any harmonic function $u_0(r, \varphi)$ from $L_2(\Omega)$.

By Poisson's formula
\[u_0(r,\varphi)=\frac{1}{2\pi}\int_0^{2\pi}\dfrac{1-r^2}{1-2r\cos(\varphi-\gamma)+r^2}u_0(1,\gamma)d\gamma,\]
the equality \eqref{2.9} is transformed to
\[
\begin{split}
&\frac{1}{2\pi}\int_0^{2\pi}u_0(1,\gamma)\biggl\{\int_t^1\dfrac{1-(1-\tau^2)^2}{1-2(1-\tau^2)\cos(\varphi-\gamma)+(1-\tau^2)^2}\cdot \dfrac{\tau^2-t^2}{2\tau}d\tau\\[5pt]
&\,+\frac{1}{2\pi}\int_0^{2\pi} \int_t^1\dfrac{1-\bigl(\frac{\rho^2-t^2}{\rho}\bigr)^2}{1-2\bigl(\frac{\rho^2-t^2}{\rho}\bigr)\cos(\theta-\gamma)+\bigl(\frac{\rho^2-t^2}{\rho}\bigr)^2}\sum_{j=1}^{\infty}s_j F_j(1, \varphi) \overline{Q_j(\rho,\theta)}\rho d\rho d\theta\biggr\}d\gamma=0.
\end{split}
\]
Considering the density of the set of functions $u_0(1, \varphi)$ in $L_2(0, 2\pi)$ for almost all values of $t \, (0\leq t\leq1), \; \varphi \; (0\leq\varphi<2\pi)$ we obtain the equality
\begin{equation}\label{2.10}
\begin{split}
&\int_t^1\dfrac{1-(1-\tau^2)^2}{1-2(1-\tau^2)\cos(\varphi-\gamma)+(1-\tau^2)^2}\cdot \dfrac{\tau^2-t^2}{2\tau}d\tau\\[5pt]
&\,+\frac{1}{2\pi}\int_0^{2\pi} \int_t^1\dfrac{1-\bigl(\frac{\rho^2-t^2}{\rho}\bigr)^2}{1-2\bigl(\frac{\rho^2-t^2}{\rho}\bigr)\cos(\theta-\gamma)+\bigl(\frac{\rho^2-t^2}{\rho}\bigr)^2}\sum_{j=1}^{\infty}s_j F_j(1, \varphi) \overline{Q_j(\rho,\theta)}\rho d\rho d\theta=0.
\end{split}
\end{equation}
Now the equation \eqref{2.10} is the Volterra criterion of the correct restriction $L$ of the maximal operator $\widehat{L}$ generated by the Laplace operator \eqref{1.3} in $L_2(\Omega)$, where $\Omega$ is the unit disk. 

Further, we apply to the equation \eqref{2.10} the Poisson operator of the variables $r$ and $\varphi$. The first summand we transform with the formula of the superposition of two Poisson integrals (see {\cite[p. 140]{Axler}}), and in the second summand the harmonic function $F_j(r,\varphi)$ is reproduced by Poisson's formula. We have
\[
\begin{split}
&\int_t^1\dfrac{1-r^2(1-\tau^2)^2}{1-2r(1-\tau^2)\cos(\varphi-\gamma)+r^2(1-\tau^2)^2}\cdot \dfrac{\tau^2-t^2}{2\tau}d\tau\\[5pt]
&\,+\frac{1}{2\pi}\int_0^{2\pi} \int_t^1\dfrac{1-\bigl(\frac{\rho^2-t^2}{\rho}\bigr)^2}{1-2\bigl(\frac{\rho^2-t^2}{\rho}\bigr)\cos(\theta-\gamma)+\bigl(\frac{\rho^2-t^2}{\rho}\bigr)^2}\sum_{j=1}^{\infty}s_j F_j(r, \varphi) \overline{Q_j(\rho,\theta)}\rho d\rho d\theta=0.
\end{split}
\]
From this equality using the orthonormality of the system $\{F_j(r,\varphi)\}_1^\infty$ we obtain the relation between the orthonormal systems $\{F_j\}_1^\infty$ and $\{Q_j\}_1^\infty$ of the following form
\begin{equation}\label{2.11}
\begin{split}
&\int_t^1 \biggl\{\frac{1}{2\pi}\int_0^{2\pi}\int_0^1\dfrac{1-r^2(1-\tau^2)^2}{1-2r(1-\tau^2)\cos(\varphi-\gamma)+r^2(1-\tau^2)^2} \overline{F_j(r,\varphi)}r drd\varphi\biggr\}\cdot \dfrac{\tau^2-t^2}{2\tau}d\tau\\[5pt]
&\,=-\frac{1}{2\pi}\int_0^{2\pi} \int_t^1\dfrac{1-\bigl(\frac{\rho^2-t^2}{\rho}\bigr)^2}{1-2\bigl(\frac{\rho^2-t^2}{\rho}\bigr)\cos(\theta-\gamma)+\bigl(\frac{\rho^2-t^2}{\rho}\bigr)^2}\cdot s_j \overline{Q_j(\rho,\theta)}\rho d\rho d\theta, \quad j=1, 2, \ldots
\end{split}
\end{equation}
In both parts of the equality \eqref{2.11} we use the expansion of the Poisson kernel
\[
\frac{1-r^2}{1-2r\cos \varphi+r^2}=1+2\sum_{n=1}^\infty r^n\cos n\varphi.
\]
We obtain the equality of the two Fourier series in the orthogonal system $\{1/2, \cos  \gamma, \sin  \gamma, \ldots,$ $\cos n\gamma, \sin n\gamma, \ldots\}$ in $L_2(0, 2\pi)$. Equating the coefficients, we get the following system of equations
\begin{equation}\label{2.12}
\left \{\begin{split}
&\frac{1}{2\pi}\int_0^{2\pi}\int_0^1 \overline{F_j(r,\varphi)}\cdot rdrd\varphi\cdot \int_t^1\dfrac{\tau^2-t^2}{2\tau}d\tau=-\frac{1}{2\pi}\int_0^{2\pi}\int_t^1 s_j \overline{Q_j(\rho,\theta)}\rho d\rho d\theta, \\[8pt]
&\frac{1}{\pi}\int_0^{2\pi}\int_0^1 \overline{F_j(r,\varphi)}\cdot r^{n+1}\cos n\varphi drd\varphi\int_t^1 (1-\tau^2)^n\dfrac{\tau^2-t^2}{2\tau}d\tau \\[4pt]
&\quad=-\int_t^1\frac{1}{\pi}\int_0^{2\pi}s_j \overline{Q_j(\rho,\theta)}\cdot \cos n\theta d\theta \biggl(\dfrac{\rho^2-t^2}{\rho}\biggr)^n\rho d\rho ,\\[8pt]
&\frac{1}{\pi}\int_0^{2\pi}\int_0^1 \overline{F_j(r,\varphi)}\cdot r^{n+1}\cdot\sin n\varphi drd\varphi\int_t^1 (1-\tau^2)^n\dfrac{\tau^2-t^2}{2\tau}d\tau \\[4pt]
&\quad=-\int_t^1\frac{1}{\pi}\int_0^{2\pi}s_j \overline{Q_j(\rho,\theta)}\cdot \sin n\theta d\theta \biggl(\dfrac{\rho^2-t^2}{\rho}\biggr)^n\rho d\rho, \quad j=1, 2, \ldots, \quad n=1, 2, \ldots.
\end{split} \right.
\end{equation}
We denote
\[
A_{nj}=\frac{1}{\pi}\int_0^{2\pi}\int_0^1 \overline{F_j(r,\varphi)}\cos n\varphi\cdot r^n\cdot rdrd\varphi,\quad n=0, 1, 2,\ldots
\]
\[
B_{nj}=\frac{1}{\pi}\int_0^{2\pi}\int_0^1 \overline{F_j(r,\varphi)}\sin n\varphi\cdot r^n\cdot rdrd\varphi,\quad n=1, 2,\ldots
\]
From the first equation of the system \eqref{2.12} it is easy to find that
\[
\frac{1}{2\pi}\int_0^{2\pi}s_j \overline{Q_j(t,\theta)}d\theta=\frac{1}{2}A_{0j}\cdot lnt.
\]
The second equation reduces to
\begin{equation}\label{2.13}
\int_t^1(1-\tau^2)^n\dfrac{\tau^2-t^2}{2\tau}d\tau=-\int_t^1(\rho^2-t^2)^n\omega_n(\rho)\rho d\rho,\quad n=1,2,\ldots
\end{equation}
if we denote by
\[
\omega_n(\rho)=\frac{1}{\pi}\int_0^{2\pi}s_j \overline{Q_j(\rho,\theta)}\cos n\theta d\theta\dfrac{1}{A_{nj}\rho^n}.
\]
The third equation is transformed into the same equation \eqref{2.13}, if we denote by
\[
\omega_n(\rho)=\frac{1}{\pi}\int_0^{2\pi}s_j \overline{Q_j(\rho,\theta)}\sin n\theta d\theta\dfrac{1}{B_{nj}\rho^n}.
\]
We solve the equation \eqref{2.13} with respect to $\omega_n(\rho)$. Note that
\[\omega_1(t)=-\dfrac{1-t^2}{2t^2},\quad \omega_2(t)=-\dfrac{1-t^4}{4t^4};\]
Further, we get the recurrence relation
\[
\begin{split}
&(1-t^2)^{n-k}=-\int_t^1(\rho^2-t^2)^{n-k-2}\cdot(n-k)(n-k-1)\cdot 4t^2\omega_n(\rho)\rho d\rho\\[5pt]
&\,+k\int_t^1(\rho^2-t^2)^{n-k-1}\cdot(n-k)\cdot 4\omega_n(\rho)\rho d\rho, \quad n=2,3,4,\ldots, \quad k=0,1,2,\ldots,n-2.
\end{split}
\]
This relation is equivalent to the Cauchy problem
\[\omega_n'(t)+\frac{2n}{t}\omega_n(t)=\frac{1}{t}, \quad \omega_n(1)=0.\]
Solving, we get
\[\omega_n(t)=\frac{1-t^{-2n}}{2n}.\]
Now we have the following relations between the orthonormal systems 
 $\{Q_j\}_1^\infty$ and $\{F_j\}_1^\infty$:

\begin{equation}\label{2.14}
\left \{\begin{split}
&\frac{1}{2\pi}\int_0^{2\pi} s_j \overline{Q_j(t,\theta)}d\theta=\frac{1}{2}A_{0j}\cdot lnt
,  \\[5pt]
&\frac{1}{\pi}\int_0^{2\pi}s_j \overline{Q_j(t,\theta)}\cdot \cos n\theta d\theta =-A_{nj}\cdot\dfrac{t^n-t^{-n}}{2n} ,\\[5pt]
&\frac{1}{\pi}\int_0^{2\pi}s_j \overline{Q_j(t,\theta)}\cdot \sin n\theta d\theta =-B_{nj}\cdot\dfrac{t^n-t^{-n}}{2n}, \quad n=1,2,\ldots
\end{split} \right.
\end{equation} 
Satisfying the Volterra criterion \eqref{2.10}, we obtained the relation \eqref{2.14}. By assumption $Q_j(t,\theta)$ from $L_2(\Omega)$. Then the integral with respect to $t$ on the left-hand sides of the system of equations \eqref{2.14} exists. However, for an arbitrary orthonormal system $\{F_j\}_1^\infty$, for $n=1,2,\ldots$, the integral on the right-hand sides of the system of equations \eqref{2.14} with respect to $t$ from $0$ to $1$ does not exist. This means that there are no orthonormal systems $\{F_j\}_1^\infty$ and $\{Q_j\}_1^\infty$ satisfying the equality \eqref{2.10}. This contradicts our assumption that there exists a Volterra correct restriction $L$. Thus, Theorem \ref{Theorem1} is proved.
\end{proof}

\begin{cor}\label{seq1} 
There does not exist a Volterra correct extension $L$ of the minimal operator $L_0$ generated by the Laplace operator \eqref{1.3} in a Hilbert space $L_2(\Omega)$, where $\Omega$ is the unit disk.
\end{cor}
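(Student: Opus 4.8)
The plan is to deduce the corollary from Theorem~\ref{Theorem1} by a duality argument, avoiding any repetition of the computations in the proof of the theorem. The key point is that the maximal operator $\widehat{L}$ is precisely the adjoint $L_0^*$ of the minimal operator, and that the Volterra property is stable under passing to the adjoint.

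First I would suppose, for contradiction, that $L$ is a Volterra correct extension of $L_0$, i.e.\ $L_0\subset L$ and $L^{-1}$ is a Volterra operator in $L_2(\Omega)$. Taking adjoints reverses the inclusion, so $L^*\subset L_0^*=\widehat{L}$; since $L$ is correct, $L^*$ is also correct with $(L^*)^{-1}=(L^{-1})^*$ bounded on all of $L_2(\Omega)$. Hence $L^*$ is a correct restriction of the maximal operator $\widehat{L}$.

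Next I would check that $(L^*)^{-1}=(L^{-1})^*$ is again a Volterra operator: the adjoint of a compact operator is compact, and the spectrum of $(L^{-1})^*$ is the complex conjugate of the spectrum of $L^{-1}$, hence still equals $\{0\}$, so $(L^{-1})^*$ is compact and quasinilpotent. Therefore $L^*$ is a Volterra correct restriction of $\widehat{L}$, which contradicts Theorem~\ref{Theorem1}. This contradiction proves the corollary.

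I do not expect a serious obstacle here: the only points requiring care are the elementary functional-analytic facts that $\widehat{L}=L_0^*$ (already recorded in Section~\ref{section1}), that correctness and the inclusion relation dualize as stated, and that compactness and quasinilpotence are preserved under adjunction. (Alternatively, one could repeat the argument of Theorem~\ref{Theorem1} directly for the family of correct extensions described by \eqref{1.4} with the condition $\mathrm{Ker}(L_D^{-1}+K^*)=\{0\}$, arriving at the same relations \eqref{2.14} with the roles of $\{F_j\}$ and $\{Q_j\}$ interchanged and the same nonexistence conclusion, but the duality argument is shorter and cleaner.)
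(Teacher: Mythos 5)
Your proposal is correct and follows essentially the same route as the paper: pass to the adjoint, use $L^*\subset L_0^*=\widehat{L}$, note that the adjoint of a Volterra operator is Volterra, and contradict Theorem~\ref{Theorem1}. You merely spell out the supporting facts (compactness and quasinilpotence under adjunction, $(L^*)^{-1}=(L^{-1})^*$) that the paper leaves implicit.
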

\begin{proof} Suppose that there exists a Volterra correct extension $L$ of the minimal operator $L_0$. From $L_0\subset L$ it follows that $L^*\subset L_0^* =\widehat{L}$. The adjoint of a Volterra operator is a Volterra operator. Then we get a contradiction to Theorem \ref{Theorem1}. This completes the proof of Corollary~\ref{seq1}.
\end{proof}
In the author's work (see {\cite{Biyarov2}}), it was proved that there are no Volterra correct extensions or restrictions for the $m$--dimensional Laplace operator in $L_2(\Omega)$, where $\Omega$ is a bounded domain in $\mathbb R^m$ with a sufficiently smooth boundary, if the operator $K$ from representation \eqref{1.4} that it belongs to the Schatten class $\mathfrak{S}_{p}(L_2(\Omega))$ for $0<p\leq m/2$, where $m\geq2$.

It was noticed that in the case $m=1$ there exists many Volterra correct restrictions and extensions.

\begin{rem}\label{rem1}
If in \eqref{2.11} the function $u(r,\varphi)$ does not depend on the angle $\varphi$,  then we get one-dimensional equation 
\begin{equation} 
\label{2.15} 
\widehat{L}u \equiv -\Delta u=-u_{rr}-\frac{1}{r}u_r-=f(r), 
\end{equation}
in the weighted space $L_2 (r;0,1)$ with weight $r$.
Then the Volterra criterion \eqref{2.10} and the equation \eqref{2.14} determine the operator $K$ of the following form
\[Kf=\int_0^1 f(t)\ln t\cdot tdt.\]
To it corresponds to the correct restriction $L$ with domain $D(L)= \{u\in  W_2^1 (r;\,0,1) : u(0)=0\}$. Then the correct restriction $L$ is a Volterra, because its inverse operator 
\[u(r)=L^{-1}f=\int_0^r\ln \frac{t}{r}f(t)tdt,\]
is a Volterra in space $L_2 (r;\,0,1)$.
\end{rem}

\begin{rem}\label{rem2}
Theorem \ref{Theorem1} is true for every bounded simply connected domain in the plane, for which the Dirichlet problem is correct and there exists a conformal mapping onto the unit disk.
\end{rem}

\begin{rem}\label{rem3}
The generalization of Theorem \ref{Theorem1} to the $m$-dimensional ball (where $m\geqslant 3$) does not cause problems but it is cumbersome to write down.
\end{rem}

\vskip 0.5 cm

\begin{flushleft}
   Bazarkan Nuroldinovich Biyarov \\
   Department of Fundamental Mathematics\\
   L.N. Gumilyov Eurasian National University \\
   2, Satpayev St\\
   010008 Astana, Kazakhstan\\
   E-mail: bbiyarov@gmail.com
\end{flushleft} 

% ------------------------------------------------------------------------
\end{document}